\numberwithin{equation}{section}
\numberwithin{figure}{section}
\theoremstyle{plain}
\newtheorem{thm}{Theorem}
  \theoremstyle{plain}
  \numberwithin{thm}{section}
  \newtheorem{cor}[thm]{Corollary}
  \theoremstyle{plain}
  \theoremstyle{remark}
  \newtheorem{rem}[thm]{Remark}
    \theoremstyle{remark}
   \theoremstyle{plain}
  \def\Ddots{\mathinner{\mkern1mu\raise\p@
\vbox{\kern7\p@\hbox{.}}\mkern2mu
\raise4\p@\hbox{.}\mkern2mu\raise7\p@\hbox{.}\mkern1mu}}
\newtheorem{definition}{Definition}
\numberwithin{definition}{section}
\newcommand{\norm}[1]{\left\| #1 \right\|}
\newcommand{\mklm}[1]{\left\{ #1 \right\}}
\newcommand{\eklm}[1]{\left\langle #1 \right\rangle}
\renewcommand{\d}{\,d}
\newcommand{\N}{{\mathbb N}}
\newcommand{\Z}{{\mathbb Z}}
\newcommand{\C}{{\mathbb C}}
\newcommand{\Ccal}{{\mathcal C}}
\newcommand{\R}{{\mathbb R}}
\newcommand{\E}{{\mathcal E}}
\newcommand{\W}{{\mathcal W}}
\newcommand{\Jbb}{{\mathbb J }}
\newcommand{\M}{{\mathcal M}}
\newcommand{\Xbf}{{\mathbf X}}
\newcommand{\1}{{\bf 1}}
\renewcommand{\epsilon}{\varepsilon}
\renewcommand{\rho}{\varrho}
\newcommand{\bdm}{\begin{displaymath}}
\newcommand{\edm}{\end{displaymath}}
\newcommand{\bq}{\begin{equation}}
\newcommand{\eq}{\end{equation}}
\newcommand{\bqn}{\begin{equation*}}
\newcommand{\eqn}{\end{equation*}}
\newcommand{\Cinft}{{\rm C^{\infty}}}
\newcommand{\CT}{{\rm C^{\infty}_c}}
\renewcommand{\L}{{\rm L}}
\newcommand{\Lcal}{{\mathcal L}}
\newcommand{\Ncal}{{\mathcal N}}
\renewcommand{\S}{{\mathcal S}}
\newcommand{\g}{{\bf \mathfrak g}}
\renewcommand{\t}{{\bf \mathfrak t}}
\newcommand{\Ad}{\mathrm{Ad}\,}
\renewcommand{\det}{\mathrm{det}\,}
\newcommand{\vol}{\text{vol}\,}
\newcommand{\Crit}{\mathrm{Crit}}
\DeclareMathOperator{\supp}{supp\,}
\DeclareMathOperator{\mult}{mult}
\DeclareMathOperator{\tr}{tr}
\DeclareMathOperator{\gd}{\partial}
\begin{document}

\author{Pablo Ramacher}
\title[Addendum to ''Singular equivariant asymptotics and Weyl's law'']{Addendum to \\ ''Singular equivariant asymptotics and Weyl's law''} 
\address{Pablo Ramacher, Philipps-Universit\"at Marburg, Fachbereich Mathematik und Informatik, Hans-Meer\-wein-Str., 35032 Marburg, Germany}
\email{ramacher@mathematik.uni-marburg.de}
\date{\today}

\begin{abstract}
Let $M$ be  a closed Riemannian manifold  carrying an effective and isometric action of a compact connected Lie group $G$. We derive a refined remainder estimate in the stationary phase approximation of certain oscillatory integrals on $T^\ast M \times G$ with singular critical sets that were examined  in \cite{ramacher10} in order  to determine the asymptotic distribution of  eigenvalues  of an invariant elliptic   operator on $M$. As an immediate consequence, we deduce from this   an asymptotic multiplicity formula for families of irreducible representations in $\L^2(M)$. In forthcoming papers, the improved remainder will be used to prove an equivariant semiclassical Weyl law  \cite{kuester-ramacher15a} and a corresponding equivariant quantum ergodicity theorem \cite{kuester-ramacher15b}.  

\end{abstract}

\maketitle

\setcounter{tocdepth}{1}

\tableofcontents

\section{Introduction}

Let $M$ be a compact $n$-dimensional  Riemannian manifold without boundary,  carrying an isometric and  effective action of a connected compact Lie group $G$ with Lie algebra $\g$. In the  study of  the spectral geometry of $M$ one is led to an examination of the asymptotic behavior of oscillatory integrals of the form 
\begin{equation}
I(\mu):=\intop_{T^{*}U}\intop_{G}e^{i\mu\Phi(x,\xi,g)}a_\mu(x,\xi,g)\, dg\, d\left(T^{*}U\right)(x,\xi), \qquad \mu \to +\infty, \label{eq:integral}
\end{equation}
where $(\gamma,U)$ denotes a chart on $M$, $a_\mu\in \CT( T^{*}U\times G)$  an amplitude that might depend on the parameter  $\mu>0$, 
and the phase function is given by 
\bqn
\Phi(x,\xi,g):=\left\langle \gamma(x)-\gamma(g\cdot x ),\xi\right\rangle, \qquad (x,\xi) \in T_x^\ast U,\, g \in G,
\eqn
see   \cite{ramacher10, paniagua-ramacher12}. Here  $dg$ stands for the normalized Haar measure on $G$, and $d(T^\ast U)$  for the canonical symplectic volume form of the  co-tangent bundle of $U$, which coincides with the Riemannian volume form given by the Sasaki metric on $T^\ast U$. It is assumed that $(x,\xi,g) \in \supp a_\mu$ implies $g \cdot x \in U$, where we  wrote  $(x,\xi)$ for an element in $T^*U \simeq U \times \R^n$ with respect to the canonical trivialization of the co-tangent  bundle over the chart domain. The phase function $\Phi$ represents a global analogue of the momentum map  $\Jbb:T^\ast M \to \g^\ast$  of the Hamiltonian action of $G$ on $T^\ast M$, and oscillatory integrals with phase function given by the latter have been examined in \cite{ramacher13} in the context of equivariant cohomology.  The critical set of $\Phi$ is given by 
 \begin{align*}
\Crit(\Phi)&=\mklm{(x,\xi,g) \in T^\ast U \times G : (\Phi_{\ast})_{(x,\xi,g)}=0}=\mathcal{C}\cap (T^\ast U\times G),
\end{align*}
where 
\bqn
\mathcal{C}:=\{(x,\xi,g)\in\Omega\times G:\; g\cdot(x,\xi)=(x,\xi)\}, \qquad \Omega:=\Jbb^{-1}(0).
\eqn
 Now, unless the $G$-action on $T^\ast M$ is free, the momentum map $\Jbb$ is not a submersion, so that the zero set $\Omega$ of the momentum map and the critical set of  $\Phi$ are not smooth manifolds. The stationary phase theorem can therefore not immediately be applied to the integrals  $I(\mu)$. Nevertheless, it was shown in \cite{ramacher10}   that by constructing a strong resolution of the set 
$
\Ncal:=\mklm{(x,g) \in M \times G : g\cdot x = x}
$
a partial desingularization $\mathcal{Z}: \widetilde {\bf X} \rightarrow {\bf X}:= T^\ast M \times G $ of the critical  set $\mathcal{C}$ can be achieved, and applying the stationary phase theorem in the resolution space $\widetilde {\bf X}$ an asymptotic description of $I(\mu)$ can be obtained. Indeed, the map $\mathcal{Z}$ yields 
a partial monomialization of  the local ideal $I_\Phi=(\Phi)$ generated by the phase function $\Phi$ according to  
\bqn 
\mathcal{Z}^\ast (I_\Phi) \cdot \E_{\tilde x, \widetilde {\bf X}} = \prod_j\sigma_j^{l_j}   \cdot\mathcal{Z}^{-1}_\ast(I_\Phi) \cdot \E_{\tilde x, \widetilde {\bf X}},
\eqn
 where $\E_{\widetilde{\bf X}}$ denotes the structure sheaf of rings of $\widetilde{\bf X}$,  $\mathcal{Z}^\ast (I_\Phi)$ the total transform, and $\mathcal{Z}_\ast^{-1} (I_\Phi)$ the weak transform of $I_\Phi$, while the  $\sigma_j$ are local coordinate functions near each $\tilde x \in \widetilde {\bf X}$ and the $l_j$ natural numbers. As a consequence, the phase function factorizes locally according to $\Phi \circ \mathcal{Z} \equiv \prod \sigma_j^{l_j} \cdot  \tilde \Phi^ {wk}$,
and one shows  that  the weak transforms $ \tilde \Phi^ {wk}$ have clean critical sets. 
Asymptotics for the integrals $I(\mu)$ are  then obtained by pulling  them back to the resolution space $\widetilde {\bf X}$, and applying  the stationary phase theorem to the $\tilde \Phi^{wk}$  with the variables  $\sigma_j$ as parameters. 
More precisely, let   $\text{Reg }\mathcal{C}\subset T^*M\times G$ denote the regular part of $\Ccal $, and regard it as a Riemannian submanifold with Riemannian metric  induced by the product metric of the Sasaki metric on $T^*M$ and some left-invariant Riemannian metric on $G$. The corresponding induced Riemannian volume form will be denoted by $ d(\textrm{Reg}\,\mathcal{\mathcal{C}})$. It was then shown in \cite[Theorem 9.1]{ramacher10} that for a $\mu$-independent amplitude $a\in \CT( T^{*}U\times G)$
one has the asymptotic formula
\bqn
I(\mu)=\left(\frac{2\pi}{\mu}\right)^{\kappa}\intop_{\textrm{Reg} \, {\mathcal{C}}}\frac{a(x,\xi,g)}{\left|\det\Phi''(x,\xi,g)_{|N_{(x,\xi,g)}\mathrm{Reg}\, {\mathcal{C}}}\right|^{1/2}}\, d(\textrm{Reg}\,\mathcal{\mathcal{C}})(x,\xi,g)+ \mathfrak{R}(\mu), 
\eqn
where $\kappa$ stands for the dimension of a $G$-orbit of principal type in $M$, $\Phi''(x,\xi,g)_{|N_{(x,\xi,g)}\mathrm{Reg}\,\mathcal{\mathcal{C}}}$
denotes the restriction of the Hessian of $\Phi$ to the normal space of $\mathrm{Reg}\,\mathcal{\mathcal{C}}$ inside $T^{*}U\times G$ at the point $(x,\xi,g)$, and the remainder satisfies the estimate
\bq
\label{eq:11.07.2015}
\mathfrak{R}(\mu)=O( \mu^{-\kappa-1}(\log\mu)^{\Lambda-1}),
\eq
 $\Lambda$ being the maximal number of elements of a totally ordered subset of the set of isotropy types of the $G$-action on $M$. \\

The goal of this note is to extend the asymptotic formula above  to amplitudes $a_\mu \in \CT(T^{*}U\times G) $ that depend on $\mu$, and  derive a refined remainder estimate of the form 
\begin{align}
\label{eq:12.07.2015}
 |\mathfrak{R}(\mu)| \leq 
 C\,    \sup_{l \leq 2\kappa+3} \norm{D^l a_\mu}_{\infty}  \,  \mu^{-\kappa-1}(\log\mu)^{\Lambda-1}
\end{align}
with a constant $C>0$ and differential operators $D^l$ of order $l$ independent of $\mu$ and $a_\mu$, where $\norm{ \cdot}_\infty$ denotes the supremum norm. This is accomplished in Theorem \ref{thm:main}. 

Integrals of the form $I(\mu)$ with $\mu$-dependent amplitudes occur in several contexts, and the bound \eqref{eq:12.07.2015} allows a precise control of the contributions of such amplitudes to the remainder $\mathfrak{R}(\mu)$.  As an immediate consequence of Theorem \ref{thm:main} we are able to prove an asymptotic multiplicity formula for families of unitary irreducible $G$-representations in the Hilbert space $\L^2(M)$ of square integrable functions on $M$, see Theorem \ref{thm:weylfam}, generalizing the  Weyl law for the reduced spectral counting function of an invariant elliptic operator proven in \cite{ramacher10} to families of representations whose cardinality is allowed to grow in a moderate way as the energy goes to infinity. Further applications will be given in \cite{kuester-ramacher15a} and \cite{kuester-ramacher15b}, where the refinement \eqref{eq:12.07.2015} will be  crucial  to prove a sharpened equivariant semiclassical Weyl law and a corresponding equivariant quantum ergodicity theorem, which otherwise could not be obtained. \\

\section{Refined singular equivariant asymptotics}

Let the notation and assumptions be as in the introduction. The main purpose of this note is  to prove the following
\begin{thm}
\label{thm:main}
Let $\mathcal{K}\subset T^\ast U \times G$ be a compactum and $a_\mu \in \CT(T^\ast U \times G)$  a family of amplitudes such that for each $\mu>0$ the support  of $a_\mu$  is contained in  $\mathcal{K}$.  Then, as $\mu\to+\infty$ one has the asymptotic formula  
\begin{align}
\begin{split}
\Big | I(\mu)&-\left(\frac{2\pi}{\mu}\right)^{\kappa}\intop_{\textrm{Reg} \, {\mathcal{C}}}\frac{a_\mu(x,\xi,g)}{\left|\det\Phi''(x,\xi,g)_{|N_{(x,\xi,g)}Reg\, {\mathcal{C}}}\right|^{1/2}}\, d(\textrm{Reg}\,\mathcal{\mathcal{C}})(x,\xi,g)\Big | \\
&\leq C  \sup_{l \leq 2\kappa+3} \norm{D^l a_\mu}_{\infty}  \mu^{-\kappa-1}(\log\mu)^{\Lambda-1},\label{eq:I(mu)}
\end{split}
\end{align}
where  the differential operators $D^l$  and the constant $C>0$ do not  depend on $\mu$ nor on $a_\mu$.  
\end{thm}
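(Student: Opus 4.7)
The plan is to revisit the proof of Theorem 9.1 in \cite{ramacher10} and make every intermediate estimate quantitative in terms of sup-norms of derivatives of the amplitude. The structural backbone is unchanged: the partial desingularization $\mathcal{Z}\colon \widetilde{\bf X}\to {\bf X}$ of the critical set $\Ccal$ is the same, the local factorization $\Phi\circ \mathcal{Z}\equiv \prod_j \sigma_j^{l_j}\cdot \tilde \Phi^{wk}$ still holds with the weak transforms $\tilde \Phi^{wk}$ having clean critical sets, and one still pulls $I(\mu)$ back to $\widetilde{\bf X}$ and applies the stationary phase theorem to the $\tilde \Phi^{wk}$ with the $\sigma_j$ playing the role of parameters. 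What must change is that each intermediate bound, previously written as $O_{a}(\mu^{-N})$ with a constant silently depending on $a$, has to be replaced by one of the form $C \sup_{l\leq L}\norm{D^l a_\mu}_\infty\,\mu^{-N}$.

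\textbf{Quantitative stationary phase with parameters.} After subordinating a smooth partition of unity to the finitely many coordinate patches on $\widetilde{\bf X}$ produced by the resolution, each local contribution to $I(\mu)$ takes the form
\bqn
\intop e^{i\mu \prod_j\sigma_j^{l_j}\, \tilde\Phi^{wk}(\sigma,y)}\,\tilde a_\mu(\sigma,y)\,dy\,d\sigma,
\eqn
where $\tilde a_\mu$ is obtained from $a_\mu$ by multiplication with cutoffs and the Jacobian of $\mathcal{Z}$. For fixed $\sigma$ with $\prod_j\sigma_j^{l_j}\neq 0$ the inner $y$-integral has a phase with a clean critical manifold of codimension $2\kappa$, and the quantitative stationary phase theorem with parameters (keeping one term past the leading order) gives
\bqn
\Big|\intop e^{i\tilde\mu \tilde\Phi^{wk}}\tilde a_\mu\,dy - \Big(\frac{2\pi}{\tilde\mu}\Big)^{\kappa}\!\!\intop_{\Crit}\frac{\tilde a_\mu}{\big|\det (\tilde\Phi^{wk})''_{|N}\big|^{1/2}}\Big|\leq C\,\tilde\mu^{-\kappa-1}\sup_{l\leq 2\kappa+3}\norm{D^l\tilde a_\mu(\sigma,\cdot)}_\infty,
\eqn
with $\tilde\mu:=\mu\prod_j \sigma_j^{l_j}$. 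The specific order $2\kappa+3$ is exactly the number of derivatives needed to control the first subleading remainder in a clean stationary phase expansion with normal bundle of rank $2\kappa$, and it is independent of $\mu$ and of $a_\mu$.

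\textbf{Integrating out the resolution parameters.} Re-inserting these estimates yields two $\sigma$-integrals. The leading terms, integrated against the monomial weights coming from $\tilde\mu^{-\kappa}$, reassemble by the change of variables $\mathcal{Z}$ into the principal term $\int_{\textrm{Reg}\,\Ccal}\cdots\, d(\textrm{Reg}\,\Ccal)$ of the theorem, exactly as in \cite[Sec.\ 9]{ramacher10}. The remainder terms yield integrals of the form
\bqn
\mu^{-\kappa-1}\sup_{l\leq 2\kappa+3}\norm{D^l\tilde a_\mu}_\infty \intop \prod_j \sigma_j^{-(\kappa+1) l_j}\chi(\sigma)\,d\sigma,
\eqn
which, after redistributing the remaining resolution factors, become absolutely convergent and produce at most $\Lambda-1$ logarithmic powers of $\mu$, with $\Lambda$ equal to the length of the longest chain of isotropy types and hence to the maximal depth of the resolution tree. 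This is the same mechanism already exploited in \cite{ramacher10}, and the crucial point is that $\sup_l\norm{D^l\tilde a_\mu}_\infty$ is a prefactor that survives the $\sigma$-integration unchanged. Summing over the finitely many charts and partition of unity pieces yields \eqref{eq:I(mu)}.

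\textbf{Main obstacle.} The core technical task that separates this from a direct copy of \cite[Thm.\ 9.1]{ramacher10} is to establish uniform Fa\`a di Bruno--type bounds of the form
\bqn
\norm{D^l\tilde a_\mu}_\infty\leq C_l \sup_{k\leq l}\norm{D^k a_\mu}_\infty,\qquad l\leq 2\kappa+3,
\eqn
with constants $C_l$ depending only on $\mathcal{Z}$, the chosen cutoffs, and the fixed compactum $\mathcal{K}$, but not on $\mu$ or $a_\mu$. Since the resolution consists of a finite sequence of explicit blow-ups and the integrand is supported in $\mathcal{K}$, this follows by induction along the blow-up tree, but the bookkeeping across the iterated local coordinates is the most delicate ingredient of the argument. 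Once this quantitative control is in place, every step of the proof of \cite[Thm.\ 9.1]{ramacher10} can be repeated with $O_a(\cdot)$ systematically replaced by $\sup_{l\leq 2\kappa+3}\norm{D^l a_\mu}_\infty \cdot O(\cdot)$, giving the stated bound.
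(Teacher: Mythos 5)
Your overall architecture does match the paper's: desingularize, factor the phase, apply stationary phase in the weak-transform variables with the exceptional parameters $\sigma$ held fixed, and carry the sup-norms of derivatives of the amplitude through every estimate. The Fa\`a di Bruno bounds you single out as the main obstacle are indeed needed (the local amplitudes are pullbacks of $a_\mu$ times cutoffs and Jacobians), but they are the routine part. Two genuinely delicate points are missing. First, your stationary phase bound is in terms of $\tilde\mu=\mu\prod_j\sigma_j^{l_j}$, which tends to $0$ as $\sigma\to 0$, so the estimate $C\,\tilde\mu^{-\kappa-1}$ is useless near the exceptional divisors; correspondingly, the $\sigma$-integral $\int\prod_j\sigma_j^{-(\kappa+1)l_j}\chi(\sigma)\,d\sigma$ you write down is divergent, not ``absolutely convergent after redistributing factors'' --- and an absolutely convergent, $\mu$-independent integral could not produce the factor $(\log\mu)^{\Lambda-1}$ in any case. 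The actual mechanism is to split each $\sigma$-integration at $|\tau_{i_j}(\sigma)|=\epsilon$ with $\epsilon=\mu^{-1/N}$: on $|\tau|<\epsilon$ one estimates trivially using the Jacobian weights $|\tau_{i_j}|^{e^{(i_j)}-1}$ with $e^{(i_j)}-1\geq\kappa$, which gives a contribution of order $\norm{a_\mu}_\infty\,\epsilon^{N(\kappa+1)}=\norm{a_\mu}_\infty\,\mu^{-\kappa-1}$, while on $\epsilon<|\tau|<1$ the first-order remainder carries the weight $|\tau|^{e^{(i_j)}-2-\kappa}\geq|\tau|^{-1}$, whose integral over $(\epsilon,1)$ is $-\log\epsilon\sim\tfrac1N\log\mu$. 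That splitting, and with it the true origin of the logarithms, is absent from your argument.

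Second, the constant in your ``quantitative stationary phase with parameters'' is controlled by $\sup|\det\mathrm{Hess}\,\tilde\Phi^{wk}_\sigma{}_{|N\widetilde\Ccal}|^{-1/2}\cdot\|(\mathrm{Hess}\,\tilde\Phi^{wk}_\sigma{}_{|N\widetilde\Ccal})^{-1}\|^r$ taken over $\supp \tilde a_\mu\cap\Crit\tilde\Phi^{wk}_\sigma$, so a priori it depends both on $\sigma$ (it could blow up as $\sigma\to 0$) and on $\mu$ (through the support of $a_\mu$). You assert independence of $\mu$ and $a_\mu$ without justification. The paper needs two separate inputs here: uniformity in $\sigma$, which comes from the cleanness results for the weak transforms on the full parameter space, and uniformity in $\mu$, which follows because the resolution does not touch the fiber variable $\xi$, the weak transform and its transversal Hessian depend linearly on $\xi$, and $\xi$ is the only non-precompact variable while $\supp a_\mu\subset\mathcal{K}$ is fixed. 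Finally, you do not treat the non-stationary contributions $R(\mu)$ at all; for these one must verify that the lower bounds on the gradients of the weak transforms hold uniformly over the $\mu$-dependent supports of the amplitudes before the non-stationary phase lemma yields a term of order $\sup_{l\leq\kappa+1}\norm{D^l a_\mu}_\infty\,\mu^{-\kappa-1}$.
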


\begin{rem}
Note that both the derivatives and the supports of the amplitudes $a_\mu$  have to satisfy suitable assumptions about the way they depend on $\mu$ in order to obtain meaningful asymptotics.
\end{rem}

\begin{proof}
For $\mu$-independent amplitudes the statement of the theorem is  essentially the content of \cite[Theorem 9.1]{ramacher10}. In case of a $\mu$-dependent amplitude $a_\mu$, the precise dependence of the remainder estimate on the amplitudes $a_\mu$ has to be taken into account.  For this, let us  recall the main arguments and results from \cite{ramacher10}, and consider the decomposition of $M$ into orbit types
 \bq
 \label{eq:2.19}
M=M(H_1) \, \dot \cup \, \cdots \, \dot \cup \, M(H_L),
\eq
where we suppose that  the isotropy types $(H_1), \dots, (H_L)$ are numbered in such a way that $(H_i) \geq (H_j)$ implies $i \leq j$, compare Figure \ref{fig:tree}. One then constructs a partial desingularization 
\bq
\label{eq:220215}
\mathcal{Z}: \widetilde {\bf X} \rightarrow {\bf X}:= T^\ast M \times G 
\eq
of the critical set $\Ccal$ as follows. For each  $1 \leq N \leq \Lambda-1$ and each maximal, totally ordered subset $\mklm{(H_{i_1}),\dots, (H_{i_N})}$ of non-principal isoptropy types one constructs sequences of consecutive local blow-ups $\mathcal{Z}_{i_1\dots i_N}^{\rho_{i_1}\dots \rho_{i_N}}$ whose  respective centers are given by disjoint unions over maximal singular isotropy bundles 
\begin{figure}[h!]
\begin{tikzpicture}[node distance=1.4cm, auto]

\node (A00) {$H_{L}$}; 

\node (B0) [above left of=A00] {$H_{L-4}$}; 
\node (C0) [right of=B0] {$H_{L-3}$}; 
\node (D0) [right of=C0] {$H_{L-2}$}; 
\node (F0) [right of=D0] {$H_{L-1}$}; 

\node (D) [above left of=B0] {$H_{m-1}$}; 
\node (E) [right of=D] {$H_m$}; 
\node (F) [right of=E] {$H_{m+1}$}; 

\node (E1) [above left of =D] {$H_{i+2}$}; 
\node (F1) [right of=E1] {$H_{i+3}$}; 
\node (G1) [right of=F1] {$\cdots$};
\node (H1) [right of=G1] {$H_l$};
\node (I1) [right  of=H1] {$H_{l+1}$};

\node (D2) [above left of=E1] {$H_1$}; 
\node (E2) [right of=D2] {$H_2$}; 
\node (F2) [right of=E2] {$H_3$}; 
\node (G2) [right of=F2] {$\cdots$};
\node (H2) [right of=G2] {$H_{i-1}$};
\node (I2) [right of=H2] {$H_{i}$};
\node (J2) [right of=I2] {$H_{i+1}$};

\draw[-] (A00) to node {} (B0);
\draw[-] (A00) to node {} (C0);
\draw[-] (A00) to node {} (D0);
\draw[-] (A00) to node {} (F0);

\draw[-] (E1) to node {} (D2);
\draw[-] (E2) to node {} (E1);
\draw[-] (E2) to node {} (F1);

\draw[-, dashed] (D) to node {} (B0);
\draw[-, dashed] (E) to node {} (C0);
\draw[-, dashed] (F) to node {} (C0);

\draw[-] (F1) to node {} (F2);
\draw[-] (E2) to node {} (E1);

\draw[-] (H1) to node {} (H2);
\draw[-] (I2) to node {} (I1);
\draw[-] (J2) to node {} (I1);

\draw[-, dashed] (F) to node {} (H1);
\draw[-, dashed] (F) to node {} (I1);

\draw[-, dashed] (D) to node {} (E1);
\draw[-, dashed] (D) to node {} (F1);

\draw[-, dashed] (I1) to node {} (F0);

\end{tikzpicture}
\caption{An isotropy tree corresponding to the decomposition \eqref{eq:2.19}.  A line between two subgroups indicates partial ordering.}\label{fig:tree}
\end{figure}
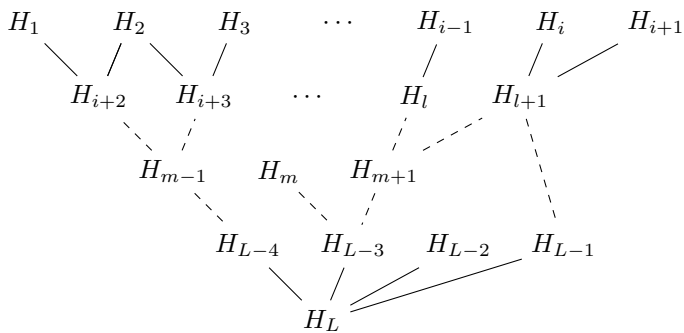
labeled by the types $\mklm{(H_{i_1}),\dots, (H_{i_N})}$, and are realized in  a set of local charts labeled by the indices $\rho_{i_1},\dots, \rho_{i_N}$, see \cite[Eq. (6.7)]{ramacher10}. The global morphism induced by the local transformations $\mathcal{Z}_{i_1\dots i_N}^{\rho_{i_1}\dots \rho_{i_N}}$  is then denoted by \eqref{eq:220215}. For the precise construction,  the reader is referred to   \cite[Beginning of Section 9 and  Section 5]{ramacher10}. Now, if we pull  the phase function $\Phi$ back along   the maps  $\mathcal{Z}^{\rho_{i_1}\dots \rho_{i_N}}_{i_1\dots i_N}$, it factorizes locally according to 
\bqn
\Phi \circ \mathcal{Z}^{\rho_{i_1}\dots \rho_{i_N}}_{i_1\dots i_N} =\, ^{(i_1\dots i_N)} \widetilde \Phi^{tot}=\tau_{i_1}(\sigma) \dots \tau_{i_N}(\sigma) \, ^{(i_1\dots i_N)}\widetilde \Phi^ {wk}, 
\eqn
 where the  $\tau_{i_j}$ are monomials in the exceptional parameters $\sigma_{i_1},\dots, \sigma_{i_N}$ given by the sequence of local quadratic transformations
 \begin{align*}
\delta_{i_1\dots i_N}: (\sigma_{i_1}, \dots \sigma_{i_N}) &\mapsto \sigma_{i_1}( 1, \sigma_{i_2}, \dots, \sigma_{i_N})= (\sigma_{i_1}', \dots ,\sigma_{i_N}')\mapsto \sigma_{i_2}'(\sigma_{i_1}',1,\dots, \sigma_{i_N}')= (\sigma_{i_1}'', \dots, \sigma_{i_N}'')\\
 &\mapsto \sigma_{i_3}''(\sigma_{i_1}'',\sigma_{i_2}'', 1,\dots, \sigma_{i_N}'')= \cdots \mapsto \dots = (\tau_{i_1}, \dots ,\tau_{i_N}).
\end{align*}
On the other hand, if we  transform the oscillatory integral $I(\mu)$ under the global morphism  $\mathcal{Z}$ using suitable partitions of unity we obtain the decomposition
\begin{align}
\label{eq:65}
I(\mu)&=\sum _{N=1}^{\Lambda-1}\, \sum_{\stackrel{i_1<\dots< i_N}{ \rho_{i_1}, \dots ,\rho_{i_N}}} I_{i_1\dots i_N}^{\rho_{i_1} \dots \rho_{i_{N}}}( \mu)+\sum_{N=1}^{\Lambda-1} \, \sum_{\stackrel{j_1<\dots< j_{N-1}<L}{ \rho_{j_1}, \dots ,\rho_{j_{N-1}}}} I_{j_1\dots j_{N-1} L}^{\rho_{j_1} \dots \rho_{j_{N-1}}}( \mu) + R(\mu),
\end{align}
where the first term is a sum over  maximal, totally ordered subsets of non-principal isotropy types, while the second term is given by a sum over arbitrary,  totally ordered subsets of non-principal isotropy types, and $R(\mu)$ denotes certain non-stationary contributions, see \cite[Eq. (9.1)]{ramacher10}. Here  
\bqn
\label{eq:fub}
I_{i_1\dots i_N}^{\rho_{i_1}\dots \rho_{i_N}}(\mu):= \int_{ (-1,1)^N}    J_{i_1\dots i_N}^{\rho_{i_1}\dots \rho_{i_N}}\Big (  \mu \cdot {\tau_{i_1}(\sigma) \cdots \tau_{i_N}(\sigma) } \Big ) \prod_{j=1}^N |\tau_{i_j}(\sigma)|^{e^{(i_j)}-1}  |\det D\delta_{i_1\dots i_N}(\sigma)| \d \sigma,
\eqn
and 
\begin{align*}
\label{eq:62}
\begin{split}
  J_{i_1\dots i_N}^{\rho_{i_1}\dots \rho_{i_N}}(\nu):= \int_{\widetilde \Xbf_{i_1\dots i_N}^{\rho_{i_1}\dots \rho_{i_N}}}  e^{i\nu \, ^{(i_1\dots i_N)} \widetilde \Phi_\sigma ^{wk}}  \, a_{i_1\dots i_N}^{\rho_{i_1}\dots \rho_{i_N}}\,  {\mathcal{J}}_{i_1\dots i_N}^{\rho_{i_1}\dots \rho_{i_N}},
    \end{split}
\end{align*}
where the $\widetilde \Xbf_{i_1\dots i_N}^{\rho_{i_1}\dots \rho_{i_N}}$ are suitable submanifolds in the resolution space $\widetilde \Xbf$ of codimension $N$, the amplitudes $a_{i_1\dots i_N}^{\rho_{i_1}\dots \rho_{i_N}}$ are compactly supported and given by pullbacks of  $a_\mu$ multiplied by  elements of partitions of unity, the  ${\mathcal{J}}_{i_1\dots i_N}^{\rho_{i_1}\dots \rho_{i_N}}$ are  Jacobians independent of $\mu$, and the $e^{(i_j)}$ are natural numbers satisfying $e^{(i_j)}-1  \geq \kappa$, see \cite[Section 8, in particular Lemma 8.1]{ramacher10}.   Besides,  we introduced the new parameter
\bqn
\nu := \mu\cdot  {\tau_{i_1}(\sigma) \cdots \tau_{i_N}}(\sigma), 
\eqn
and wrote  $ ^{(i_1\dots i_N)} \widetilde \Phi ^{wk}_\sigma $ for  the weak transform $ ^{(i_1\dots i_N)} \widetilde \Phi ^{wk}$ regarded as a function on $\widetilde \Xbf_{i_1\dots i_N}^{\rho_{i_1}\dots \rho_{i_N}}$, while the variables $\sigma=(\sigma_{i_1},\dots,  \sigma_{i_N})$ are regarded as parameters. It can then be shown that  the weak transforms $\, ^{(i_1\dots i_{N})}\widetilde \Phi^ {wk}$ have clean critical sets on $(-1,1)^{N} \times \widetilde \Xbf_{i_1\dots i_{N}}^{\rho_{i_1}\dots \rho_{i_{N}}}$ of codimension $2\kappa$ \cite[Theorems 6.1 and 7.1]{ramacher10}. By transversality, this implies that the phase functions $ ^{(i_1\dots i_N)} \widetilde \Phi ^{wk}_\sigma $ have clean critical sets, too.  Similarly,
\bqn 
I_{j_1\dots j_{N-1}j_N}^{\rho_{j_1}\dots \rho_{j_{N-1}}}(\mu):= \int_{ (-1,1)^{N-1}}    J_{j_1\dots j_{N-1}j_N}^{\rho_{j_1}\dots \rho_{j_{N-1}}}\Big (  \mu \cdot {\tau_{j_1}(\sigma) \cdots \tau_{j_{N-1}}(\sigma) } \Big ) \prod_{k=1}^{N-1} |\tau_{j_k}(\sigma)|^{e^{(j_k)}-1}  |\det D\delta_{j_1\dots j_{N-1}}(\sigma)| \d \sigma,
\eqn
where now
\begin{align*}
\begin{split}
  J_{j_1\dots j_{N-1}j_N}^{\rho_{j_1}\dots \rho_{j_{N-1}}}(\nu):= \int_{\widetilde \Xbf_{j_1\dots j_{N-1}}^{\rho_{j_1}\dots \rho_{j_{N-1}}}(H_{j_N})}  e^{i\nu \, ^{(j_1\dots j_{N-1})} \widetilde \Phi_\sigma ^{wk}}  \, a_{j_1\dots j_{N-1}}^{\rho_{j_1}\dots \rho_{j_{N-1}}}\,  {\mathcal{J}}_{j_1\dots j_{N-1}}^{\rho_{j_1}\dots \rho_{j_{N-1}}} \, \chi_{j_1\dots j_{N-1}j_N},
    \end{split}
\end{align*}
 ${\chi_{j_1\dots j_{N-1}j_N}}$ being a cut-off-function with compact support in  $\widetilde \Xbf_{j_1\dots j_{N-1}}^{\rho_{j_1}\dots \rho_{j_{N-1}}}(H_{j_N})$. In case that $j_N=L$,  $\, ^{(j_1\dots j_{N-1})}\widetilde \Phi^ {wk}$  has a clean critical set on $(-1,1)^{N-1} \times \widetilde \Xbf_{j_1\dots j_{N-1}}^{\rho_{j_1}\dots \rho_{j_{N-1}}}(H_L)$ of codimension $2\kappa$, compare \cite[Lemma 7.3]{ramacher10}. 

Now, let us proceed to the proof of   \eqref{eq:I(mu)}, for which we intend  to apply the stationary phase principle to the partially factorized phase functions. In order to deal with the competing asymptotics $\mu \to \infty$ and $\tau_{i_j} \to 0$, we define for a sufficiently small  $\epsilon>0 $ to be chosen later the integrals
\begin{align*}
\, ^1I_{i_1\dots i_N}^{\rho_{i_1}\dots \rho_{i_N}}(\mu)&:= \int_{\epsilon<|\tau_{i_j}(\sigma)|<1}      J_{i_1\dots i_N}^{\rho_{i_1}\dots \rho_{i_N}}\Big (  \mu \cdot {\tau_{i_1}(\sigma) \cdots \tau_{i_N}(\sigma) } \Big ) \prod_{j=1}^N |\tau_{i_j}(\sigma)|^{e^{(i_j)}-1}  |\det D\delta_{i_1\dots i_N}(\sigma)| \d \sigma, \\
\, ^2 I_{i_1\dots i_N}^{\rho_{i_1}\dots \rho_{i_N}}(\mu)&:= \int_{0<|\tau_{i_j}(\sigma)|<\epsilon}   J_{i_1\dots i_N}^{\rho_{i_1}\dots \rho_{i_N}}\Big (  \mu \cdot {\tau_{i_1}(\sigma) \cdots \tau_{i_N}(\sigma) } \Big ) \prod_{j=1}^N |\tau_{i_j}(\sigma)|^{e^{(i_j)}-1}  |\det D\delta_{i_1\dots i_N}(\sigma)| \d \sigma. 
\end{align*}
Now,  since the desingularization \eqref{eq:220215} of the critical set $\Ccal$ is based on a strong resolution of the set $\mathcal{N}=\mklm{(x,g) \in M \times G: \, g\cdot x=x}$,  the variable $\xi$ is not affected by the resolution process, so that  $\widetilde \Xbf_{i_1\dots i_N}^{\rho_{i_1}\dots \rho_{i_N}}=\widetilde {\bf Y}_{i_1\dots i_N}^{\rho_{i_1}\dots \rho_{i_N}}\times \R^n_\xi$, where the $\widetilde {\bf Y}_{i_1\dots i_N}^{\rho_{i_1}\dots \rho_{i_N}}$ are sub-manifolds with compact closure, and consequently of finite volume, compare \cite[Eq. (8.1)]{ramacher10}.   It is then immediate that
\begin{align}
\label{eq:I2}
\begin{split}
\big | \, ^2 I_{i_1\dots i_N}^{\rho_{i_1}\dots \rho_{i_N}}(\mu)\big | &\leq  \int_{(-\epsilon,\epsilon)^N}  \norm{a_{i_1\dots i_N}^{\rho_{i_1}\dots \rho_{i_N}}\,  {\mathcal{J}}_{i_1\dots i_N}^{\rho_{i_1}\dots \rho_{i_N}}}_{\infty}  \vol_{\widetilde \Xbf_{i_1\dots i_N}^{\rho_{i_1}\dots \rho_{i_N}}} (\supp a_{i_1\dots i_N}^{\rho_{i_1}\dots \rho_{i_N}} ) \prod_{j=1}^N |\tau_{i_j}|^{e^{(i_j)} -1} \d \tau_{i_N} \dots \d \tau_{i_1} \\
& \leq C \,   \norm{a_\mu}_\infty  \int_{(-\epsilon,\epsilon)^N}  \prod_{j=1}^N |\tau_{i_j}|^{\kappa} \d \tau_{i_N} \dots \d \tau_{i_1} =\frac{2C}{\kappa+1}  \norm{a_\mu}_\infty  \epsilon^{N (\kappa+1)}
\end{split}
\end{align}
for some constant $C>0$ independent of $\mu$ and $a_\mu$, since we assumed that  $\supp a_\mu \subset \mathcal{K}$ for all $\mu>0$. Let us now turn to the integrals $\, ^1I_{i_1\dots i_N}^{\rho_{i_1}\dots \rho_{i_N}}(\mu)$.  As in \cite[Theorem 8.2]{ramacher10}, the generalized stationary phase theorem \cite[Theorem 4.1]{ramacher10} yields
for fixed $\sigma=(\sigma_{i_1},\dots, \sigma_{i_N})$  and arbitrary $\widetilde N\in \N$ the asymptotic expansion
\bq
\label{eq:2.21}
 J_{i_1\dots i_N}^{\rho_{i_1}\dots \rho_{i_N}}(\nu)=(2\pi |\nu|^{-1})^{\kappa} \sum_{j=0}^{\widetilde N-1} Q_{j,\sigma} |\nu|^{-j}+ R_{\widetilde N,\sigma}(\nu),
\eq
together with the explicit estimates
\begin{align*}
\begin{split}
|Q_{j,\sigma}| &\leq \widetilde C_{j,  ^{(i_1\dots i_N)} \widetilde \Phi ^{wk}_\sigma} \vol_{\widetilde \Ccal_\sigma} (\supp a_{i_1\dots i_N}^{\rho_{i_1}\dots \rho_{i_N}} \cap \widetilde \Ccal_\sigma ) \sup_{l \leq 2j} \norm{D^l (a_{i_1\dots i_N}^{\rho_{i_1}\dots \rho_{i_N}}\,  {\mathcal{J}}_{i_1\dots i_N}^{\rho_{i_1}\dots \rho_{i_N}})}_{\infty}, \\
|R_{\widetilde N,\sigma}(\nu)|&\leq C_{\widetilde N,  ^{(i_1\dots i_N)} \widetilde \Phi ^{wk}_\sigma} \vol_{\widetilde \Xbf_{i_1\dots i_N}^{\rho_{i_1}\dots \rho_{i_N}}} (\supp a_{i_1\dots i_N}^{\rho_{i_1}\dots \rho_{i_N}} ) \sup_{l \leq 2\kappa +2\widetilde N+1 } \norm{D^l (a_{i_1\dots i_N}^{\rho_{i_1}\dots \rho_{i_N}}\,  {\mathcal{J}}_{i_1\dots i_N}^{\rho_{i_1}\dots \rho_{i_N}} )}_{\infty}\, \mu^{-\kappa-\widetilde N},
\end{split}
\end{align*}
where we wrote $\widetilde \Ccal_\sigma=  \mathrm{Crit} \, ^{(i_1\dots i_N)} \widetilde \Phi ^{wk}_\sigma$. 
 Moreover,   the constants  $ \widetilde C_{j,  ^{(i_1\dots i_N)} \widetilde \Phi ^{wk}_\sigma}$ and  $C_{\widetilde N,  ^{(i_1\dots i_N)} \widetilde \Phi ^{wk}_\sigma}$ are essentially bounded from above by  
 \bq
 \label{eq:06.07.2015}
 \sup_{\supp a_{i_1\dots i_N}^{\rho_{i_1}\dots \rho_{i_N}} \cap \, \widetilde \Ccal_\sigma } { |\det \mathrm{Hess} {\,  ^{(i_1\dots i_N)} \widetilde \Phi ^{wk}_\sigma}_{|N\widetilde  \Ccal}|^{-1/2} \cdot \big \|(\mathrm{Hess} {\,   ^{(i_1\dots i_N)} \widetilde \Phi ^{wk}_\sigma}_{|N\widetilde  \Ccal})^{-1}\big \|}^r
 \eq
 with $r=j,\widetilde N$, respectively, compare  \cite[Remark 4.2]{ramacher10}.  Now,  the main consequence to be drawn from  \cite[Theorems 6.1 and 7.1]{ramacher10} is that these bounds are uniform in $\sigma$, see \cite[Remark 8.3 and  proof of Theorem 8.2]{ramacher10}.  On the other hand, the amplitude $a_{i_1\dots i_N}^{\rho_{i_1}\dots \rho_{i_N}}$ is given in terms of $a_\mu$, so that the bounds  \eqref{eq:06.07.2015}  still depend on $\mu$. 
But  since the desingularization \eqref{eq:220215} does not affect the variable $\xi$, the weak transform  $\,  ^{(i_1\dots i_N)} \widetilde \Phi ^{wk}_\sigma$ depends linearly on $\xi$,  and its tranversal Hessian will depend  linearly on $\xi$ as well, see \cite[Page 54]{ramacher10}. From this one concludes that \eqref{eq:06.07.2015} is uniformly bounded for all $\sigma $ \emph{and}  $\mu$, since $\xi\in \R^n$  is the only variable in the resolution space  with a non-pre-compact domain of definition.
Taking into account that  $\supp a_\mu \subset \mathcal{K}$ for all $\mu>0$,  integration of \eqref{eq:2.21} now yields   for $\widetilde N=1$
\begin{align*}
\Big | \, ^1I_{i_1\dots i_N}^{\rho_{i_1}\dots \rho_{i_N}}(\mu)  &- (2\pi /\mu)^{\kappa}  \int _{\epsilon < |\tau_{i_j}(\sigma) |< 1}   Q_{0,\sigma} \prod_{j=1}^N |\tau_{i_j}(\sigma)|^{e^{(i_j)} -1-\kappa}  |\det D\delta_{i_1\dots i_N}(\sigma) | \d \sigma \Big| \\&
\leq c_1\,  \sup_{l \leq 2\kappa +3 } \norm{D^l a_\mu}_\infty  \mu^{-\kappa-1}   \int_{\epsilon < |\tau_{i_j}|< 1}   \prod_{j=1}^N |\tau_{i_j}|^{e^{(i_j)} -2-\kappa} \d \tau\\
& \leq c_2 \,  \sup_{l \leq 2\kappa +3 } \norm{D^l a_\mu}_\infty  \mu^{-\kappa-1} \, \prod _{j=1}^N  \max\{1,(-\log \epsilon)^{q_j}\}, 
\end{align*}
where the exponents $q_j$ can take the values $0$ or $1$, and the constants $c_i>0$ are independent of $\sigma$, $\mu$, and $a_\mu$.  Having in mind that we are interested in the case where $\mu \to +\infty$, we now set $
\epsilon=\mu^{-1/N}$. By taking into account \eqref{eq:I2} and the fact that 
\bqn 
(2\pi /\mu)^\kappa \int_{0 < |\tau_{i_j}(\sigma) |< \mu^{-1/N}}  Q_{0,\sigma}  \prod_{j=1}^N |\tau_{i_j}(\sigma)|^{e^{(i_j)} -1-\kappa}  |\det D\delta_{i_1\dots i_N}(\sigma) | \d \sigma \leq c_3 \,  \norm{a_\mu}_\infty \, \mu^{-\kappa-1}
\eqn
  we finally obtain for each of the integrals $I_{i_1\dots i_N}^{\rho_{i_1}\dots \rho_{i_N}}(\mu)$  the asymptotic expansion 
 \bqn 
I_{i_1\dots i_N}^{\rho_{i_1}\dots \rho_{i_N}}(\mu)=({2 \pi}/ \mu)^\kappa \Lcal_{i_1\dots i_N}^{\rho_{i_1}\dots \rho_{i_N}}+ c_4 \,  \sup_{l \leq 2\kappa +3 } \norm{D^l a_\mu}_\infty  \,\mu^{-\kappa-1}(\log \mu)^N,
\eqn
where the leading coefficient $\Lcal_{i_1\dots i_N}^{\rho_{i_1}\dots \rho_{i_N}}$ is given by 
\bqn
\Lcal_{i_1\dots i_N}^{\rho_{i_1}\dots \rho_{i_N}}:=\int_{\Crit( ^{(i_1\dots i_N)} \widetilde \Phi^{wk})} \frac { a_{i_1\dots i_N}^{\rho_{i_1}\dots \rho_{i_N}} {\mathcal{J}}_{i_1\dots i_N}^{\rho_{i_1}\dots \rho_{i_N}} \, d\Crit( ^{(i_1\dots i_N)} \widetilde \Phi^{wk})} {|\det \mathrm{Hess} ( ^{(i_1\dots i_N)} \widetilde \Phi^{wk})_{|N\Crit( ^{(i_1\dots i_N)} \widetilde \Phi^{wk})}|^{1/2}},
\eqn
and $ d\Crit( ^{(i_1\dots i_N)} \widetilde \Phi^{wk})$ denotes the induced Riemannian volume density. This is  \cite[Theorem 8.4]{ramacher10} with a refined  remainder estimate.
Since the integrals $I_{j_1\dots j_{N-1}L}^{\rho_{j_1}\dots \rho_{j_{N-1}}}(\mu)$ have analogous descriptions, we are left with the task of examining the non-stationary contributions $R(\mu)$ in \eqref{eq:65}. They are of two types: either they arise by localizing the integrals $ J_{j_1\dots j_{N-1}j_N}^{\rho_{j_1}\dots \rho_{j_{N-1}}}(\nu)$  to tubular neighborhoods of the relevant critical sets, or they correspond to integrals $J_{i_1\dots i_{N}}^{\rho_{i_1}\dots \rho_{i_{N}}}(\nu)$  over charts of the resolution spaces where the weak transforms of the phase functions do not have critical points. In both cases, the considered non-stationary domains do have $\mu$- and $a_\mu$-independent  distances strictly larger than zero to the relevant critical sets due to the particular resolution and cut off functions employed, see \cite[Pages 27, 34, and 38]{ramacher10}. Furthermore,  the $\xi$-gradients of the weak transforms are given in terms of those distances, compare \cite[Eq. (6.11)]{ramacher10}. 
Therefore, the  lengths of the gradients of the weak transforms of the non-stationary contributions are uniformly bounded from below on the pre-images of $\supp a_\mu$ in any of  the  relevant charts,  no matter how the support of $a_\mu$ varies as $\mu \to \infty$.  The non-stationary phase principle \cite[Page 19]{grigis-sjoestrand} then implies that they will contribute only terms of order
\bqn 
|R(\mu)| \leq c_5 \,  \sup_{l \leq \kappa +1 } \norm{D^l a_\mu}_\infty  \,\mu^{-\kappa-1},
\eqn
compare \cite[Page 62]{ramacher10}. 
By taking into account \eqref{eq:65} and the asymptotic descriptions of  $I_{i_1\dots i_{N}}^{\rho_{i_1}\dots \rho_{i_{N}}}(\mu)$ and
$I_{j_1\dots j_{N-1}L}^{\rho_{j_1}\dots \rho_{j_{N-1}}}(\mu)$, the assertion of the theorem follows, the computation of the leading term being already accomplished in \cite[Theorem 9.1]{ramacher10}. 
\end{proof}

\section{An asymptotic multiplicity formula in $\L^2(M)$}

\subsection{Asymptotic behavior of families of irreducible representations}\label{sec:famirreps}

Let the notation be as in the introduction and $\widehat G$  the set of all equivalence classes of unitary irreducible representations of $G$.  If $\chi \in \widehat G$ and $(\pi_\chi, H_\chi)\in \chi$,  $ H_\chi $ has finite dimension,  and the \emph{character of $\chi$} is  given by
\bqn 
\chi(g):=\tr \pi_\chi(g), \qquad g \in G.
\eqn
It is  denoted by the same letter. Let $d_\chi:=\chi(e)$ be the dimension of $\pi_\chi$.
Endow $M$ with  the Riemannian volume density $dM$, and consider the Peter-Weyl decomposition   
\bq
\label{eq:peter-weyl}
\L^2(M) = \bigoplus_{\chi \in \widehat G} \L^2_\chi(M)
\eq
 of the left-regular representation of $G$ in   $\L^2(M)$ into isotypic components. As an immediate consequence of Theorem \ref{thm:main} we shall generalize the  Weyl law for the reduced spectral counting function of an invariant elliptic operator on $M$ proven in \cite[Theorem 9.5]{ramacher10} to 
sums of isotypic components of the form $\bigoplus_{\chi \in \W_\lambda} \L^2_\chi(M)$, where  $\W_\lambda\subset \widehat G$ are  appropriate finite subsets  whose cardinality does not grow too fast  as $\lambda \to +\infty$. Thus, let 
  \bqn
 P_0: \Cinft(M) \longrightarrow \L^2(M)
 \eqn
be   an invariant elliptic classical pseudodifferential operator of order $m$ on $M$ with principal symbol $p(x,\xi)$, where $\Cinft(M)$ denotes the space of smooth functions on $M$. Assume that $P_0$ is positive and symmetric, and denote by $P$ its unique self-adjoint extension. Denote by ${\widehat G}'\subset \widehat G$ the subset of equivalence classes of representations occuring in \eqref{eq:peter-weyl}. Since $P$ commutes with the $G$-action, each of its eigenspaces constitutes an unitary $G$-module, and we write  $\mult_\chi(t)$ for the multiplicity of the unitary irreducible representation $(\pi_\chi, H_\chi)$ in the eigenspace $E_t$ of $P$ belonging to the eigenvalue $t$.  The following families of irreducible $G$-representations were first considered in \cite{kuester-ramacher15a} within a semiclassical context. 

\begin{definition}
\label{def:semiclfam} 
Let    $\{\W_\lambda\}_{\lambda\in (0,\infty)}$ be a family of finite sets $\W_{\lambda}\subset \widehat G'$ such that there is a $\vartheta\geq 0$  so that  for each $l\in \N$ and each differential operator $D^l$ on $G$ of order $l$ there is a constant $C>0$ independent of $\lambda$ with
\[
\max_{\chi\in \W_\lambda}\frac{\norm{D^l\, \overline{\chi}}_\infty}{\left[\pi_{\chi|_{H}}:\mathds{1}\right]}\leq C\,\lambda^{\vartheta l}\qquad \forall \;\lambda\in(0,\infty),
\]
where $ [{\pi_\chi}_{|H}:\1]$ denotes the multiplicity of the trivial representation in the restriction of $\pi_\chi$ to a principal isotropy group $H$. 
The smallest possible $\vartheta$ is called the \emph{growth rate} of the family $\W_\lambda$.
\end{definition}
\begin{rem}
Note that $[{\pi_\chi}_{|H}:\1]=\int_H \overline{\chi(h)} \d h$, see \cite[Eq. (3.33)]{cassanas}, and by the Frobenius reciprocity theorem one has $[{\pi_\chi}_{|H}:\1]=[\L^2(G/H):\pi_\chi]$.  Furthermore, the irreducible $G$-representations appearing in the Peter-Weyl decomposition of $\L^2(M)$ are  precisely those $G$-representations appearing in $\L^2(G/H)$, so that $[{\pi_\chi}_{|H}:\1]\geq 1$ for $\chi \in \widehat G'$. If the orbit space $M/G$ has dimension greater than one then each irreducible $G$-representation appears an infinite number of times, compare \cite[Section 2]{donnelly78}.
\end{rem}

The following result is a generalization of \cite[Theorem 9.5]{ramacher10} to growing families of isotypic components. 
\begin{thm}\label{thm:weylfam}
With the notation as above assume that $n-\kappa \geq 1$, set 
\bqn
\M_\chi(\lambda):=\sum _{t \leq \lambda} \mult_\chi(t),
\eqn 
and write $S^\ast M:= \mklm{(x,\xi) \in T^\ast M: p(x,\xi) = 1}$. If $\W_\lambda\subset \widehat G'$ is a family of growth rate $\vartheta\in \big [0, \frac 1{(2\kappa+3)m}\big )$, then as $\lambda \to +\infty$
\bqn 
\frac 1{|\W_\lambda|}\sum_{\chi \in \W_\lambda} \frac{\M_\chi(\lambda)}{[{\pi_\chi}_{|H}:\1]}= \frac{\mathrm{vol} \, [(\Omega \cap S^\ast M)/G]}{(n-\kappa)(2\pi)^{n-\kappa}}    
 \,  {\lambda} ^{\frac{n-\kappa}m }   + O\big (\lambda^{\frac{n-\kappa-1}m+ \vartheta(2\kappa+3)} (\log \lambda )^\Lambda \big ).
\eqn
 \end{thm}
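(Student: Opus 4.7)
My plan is to follow the proof of \cite[Theorem 9.5]{ramacher10} step by step, but to replace every appeal to the original stationary phase estimate by the sharpened Theorem \ref{thm:main}. This way the dependence of the remainder on the character $\chi$ is tracked explicitly, since $\chi$ enters multiplicatively in the amplitudes of the relevant oscillatory integrals through the identity $\Pi_\chi=d_\chi \int_G \overline{\chi(g)}\,L(g)\,dg$ for the orthogonal projection onto $\L^2_\chi(M)$, $L$ being the left-regular representation of $G$ on $\L^2(M)$.

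I first choose a Schwartz function $\rho\in\S(\R)$ with compactly supported Fourier transform $\widehat\rho$ satisfying $\widehat\rho(0)=1$ and $\rho\geq 0$, and consider the smoothed counting function $\widetilde{\M}_\chi(\lambda):=\sum_{t}\rho(\lambda^{1/m}-t^{1/m})\,\mult_\chi(t)$. Fourier inversion combined with the spectral theorem yields
\[
\widetilde{\M}_\chi(\lambda)=\frac{d_\chi}{2\pi}\int_\R \widehat\rho(\tau)\,e^{i\tau\lambda^{1/m}}\int_G \overline{\chi(g)}\,\tr\!\bigl(L(g)\,e^{-i\tau P^{1/m}}\bigr)\,dg\,d\tau.
\]
Inserting the H\"ormander parametrix for $e^{-i\tau P^{1/m}}$ on the small time interval supporting $\widehat\rho$, and rescaling $\xi\mapsto \lambda^{1/m}\xi$ in the cotangent fibre, the trace decomposes into a finite sum of oscillatory integrals of the form \eqref{eq:integral} with large parameter $\mu=\lambda^{1/m}$ and amplitudes $a_\mu(x,\xi,g)=\overline{\chi(g)}\,b_\mu(x,\xi,g)$, where $b_\mu$ is $\chi$-independent, compactly supported in some fixed $\mathcal{K}\subset T^\ast U\times G$, and has derivatives bounded uniformly in $\mu$ of every order. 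The Leibniz rule gives $\sup_{l\leq 2\kappa+3}\norm{D^l a_\mu}_\infty \leq C\sup_{l\leq 2\kappa+3}\norm{D^l\overline\chi}_\infty$, so that after applying Theorem \ref{thm:main}, dividing by $[{\pi_\chi}_{|H}:\1]$ and invoking Definition \ref{def:semiclfam}, the remainder in \eqref{eq:I(mu)} is bounded by $C'\lambda^{\vartheta(2\kappa+3)}\lambda^{-(\kappa+1)/m}(\log\lambda)^{\Lambda-1}$ uniformly in $\chi\in \W_\lambda$, while the leading term in \eqref{eq:I(mu)} is processed exactly as in \cite[Theorem 9.1]{ramacher10}.

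Averaging over $\W_\lambda$ and passing from $\widetilde{\M}_\chi(\lambda)$ to $\M_\chi(\lambda)$ by H\"ormander's Fourier Tauberian theorem then costs at most one power of $\lambda^{1/m}$ in the remainder together with an additional factor of $\log\lambda$, and produces the leading term $\vol[(\Omega\cap S^\ast M)/G]/\bigl((n-\kappa)(2\pi)^{n-\kappa}\bigr)\cdot \lambda^{(n-\kappa)/m}$. The growth hypothesis $\vartheta<1/((2\kappa+3)m)$ is precisely what guarantees that the resulting error $O\bigl(\lambda^{(n-\kappa-1)/m+\vartheta(2\kappa+3)}(\log\lambda)^\Lambda\bigr)$ remains strictly of lower order than the principal term.

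The principal obstacle I anticipate is ensuring that the Tauberian step is uniform in $\chi\in \W_\lambda$: the constants implicit in H\"ormander's theorem must depend on $\chi$ only through $\sup_{l\leq 2\kappa+3}\norm{D^l\overline\chi}_\infty/[{\pi_\chi}_{|H}:\1]$, in the same way as does the smoothed remainder provided by Theorem \ref{thm:main}. This is feasible because $\M_\chi$ is non-decreasing and the crude bound $\mult_\chi(t)\leq d_\chi^2$ controls the monotonicity hypothesis of the Tauberian theorem uniformly in $\chi$, but verifying that all Tauberian constants are manifestly insensitive to the varying set $\W_\lambda$ is the delicate technical point.
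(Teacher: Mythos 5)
Your proposal is correct and follows essentially the same route as the paper: reduce to the smoothed spectral function of $Q=P^{1/m}$ via the Fourier integral operator parametrix, recognize that $\overline{\chi}$ enters the amplitudes multiplicatively so that Theorem \ref{thm:main} together with the Leibniz rule and Definition \ref{def:semiclfam} yields a remainder uniform over $\chi\in\W_\lambda$ after division by $[{\pi_\chi}_{|H}:\1]$, and finish with the classical Tauberian argument. The only points to watch are cosmetic: the paper works with the normalization $d_\chi\mult_\chi^Q(\mu_j)/\dim E^Q_{\mu_j}$ and divides by $d_\chi[{\pi_\chi}_{|H}:\1]$ because the leading coefficient itself carries the factor $[{\pi_\chi}_{|H}:\1]$, and the crude bound $\mult_\chi(t)\leq d_\chi^2$ you invoke for the Tauberian step is not correct as stated (monotonicity of $\M_\chi$ and the uniform smoothed estimate are what is actually needed, exactly as in the proof of \cite[Theorem 9.5]{ramacher10}).
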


\begin{proof} 
Let $Q=(P)^{1/m}$ be the $m$-th root of $P$ given by the spectral theorem. It is a classical pseudodifferential operator of order $1$ with principal symbol $q(x,\xi)=p(x,\xi)^{1/m}$,  and if $0<\lambda_1 \leq \lambda_2 \leq \dots$ are  the eigenvalues of $P$ repeated according to their multiplicity, the eigenvalues of $Q$ are $\mu_j :=(\lambda_j)^{1/m}$. The operator $Q$ is invariant,  and we write ${\mult^Q_\chi(\mu_j)}$ for  the multiplicity of the representation $\pi_\chi$ in the eigenspace $E^Q_{\mu_j}$  of $Q$ belonging to the eigenvalue $\mu_j$. Put $\M_\chi^Q(\mu):=\sum _{t \leq \mu} \mult^Q_\chi(t)$.
As explained in \cite[Section 2]{ramacher10}, an asymptotic description of the reduced spectral counting function $N_\chi(\lambda):=d_\chi \M_\chi(\lambda)=d_\chi \M_\chi^Q(\lambda^{1/m})$ can be obtained by studying the singularities of the tempered distribution
$$
\sum _{j=1}^\infty m^Q_\chi(\mu_j)\,  e^{-it\mu_{j}} \in \S'(\R), \qquad m^Q_\chi(\mu_j):=d_\chi {\mult^Q_\chi(\mu_j)}/{\dim E^Q_{\mu_j}}.
$$
It corresponds to the distribution trace of $P_\chi \circ U(t)$,  where $U(t):=e^{-itQ}$ denotes the Fourier transform of the spectral measure of $Q$  and $P_\chi$  the projector onto the isotypic component $\L_\chi^2(M)$. This distribution trace is the Fourier transform of the distribution
\bqn 
\sigma_\chi(\mu):=\sum _{j=1}^\infty m^Q_\chi(\mu_j)\, \delta(\mu- \mu_{j}),
\eqn
and the main singularity of $\hat \sigma_\chi$ at $t=0$ can be described by approximating $U(t)$ via Fourier integral operators, yielding for sufficiently small $\delta >0$ and $\mu \geq 1$ a description  of
\bqn 
{\hat \sigma_\chi (\check \rho e^{i(\cdot) \mu})} =\sum_{j=1}^\infty m^Q_\chi(\mu_j) \hat \rho (\mu-\mu_j), \qquad \rho \in \CT(-\delta,\delta),
\eqn 
by  oscillatory integrals of the form \eqref{eq:integral}. Indeed, consider an atlas $\mklm{(\kappa_\gamma, U_\gamma)}$ for $M$ with subordinated partition of unity $\mklm{f_\gamma}$, and additional test functions $\bar f_\gamma \in \CT(U_\gamma)$ satisfying $\bar f_\gamma \equiv 1$ on $\supp f_\gamma$. Let further $\rho \in \CT(-\delta,\delta)$ be such that $1=2\pi \rho(0)$, and $0 \leq \alpha  \in \CT(1/2,3/2)$  satisfy $\alpha \equiv 1$ in a neighborhood of $1$, while $\Delta_{\epsilon, r} \in \CT(\R)$ denotes an approximation of the $\delta$-distribution at $r \in \R$ as $\epsilon \to 0$. Then, by \cite[Theorem 2.4]{ramacher10} one has 
\bqn 
\hat \sigma_\chi (\rho e^{i(\cdot) \mu})=\lim_{\epsilon  \to 0} \sum_\gamma \left [ \frac{\mu^{n-1} d_\chi}{(2\pi)^{n-1}}  I^0_{\gamma,\epsilon }(\mu,1,0) + \mathfrak{R}_{\gamma,\epsilon }(\mu) \right ], \quad 
  \mathfrak{R}_{\gamma,\epsilon}(\mu)=O\Big (d_\chi\, \mu^{n-2} \sum_{|\beta| \leq 5} \sup _{R,t} | I^\beta_{\gamma,\epsilon}(\mu,R,t)|\Big ),
\eqn
where
\begin{align*}
I^\beta_{\gamma,\epsilon }(\mu,R,t):=  &   \int_{T^\ast U_\gamma }  \int _G  e^{i {\mu}\Phi_\gamma(x,\eta,g)}    \overline{\chi(g)} f_\gamma(x)  \bar f _\gamma (g  x) J_\gamma(g,  x)   \alpha( q( x, \eta))  \\  &\cdot \gd _{R,t}^\beta  \big [ \rho(t)  a_\gamma(t, \kappa_\gamma( x) , \mu \eta)     
  \Delta_{\epsilon,R}(\zeta_\gamma(t,\kappa_\gamma( x), \eta)) \big ] \, \d g  {\d(T^\ast U_\gamma)(x, \eta) },
\end{align*}
and  $ \Phi_\gamma(x,\eta,g)=\eklm{\kappa_\gamma( x) - \kappa_\gamma( g \cdot x),\eta}$. Here $a_\gamma \in S^0_\text{phg}$ are classical symbols with $a_\gamma(0,\kappa_\gamma(x),\eta)=1$, and $\zeta_\gamma$  certain smooth functions homogeneous in $\eta$ of degree $1$ satisfying $\zeta_\gamma(0, \kappa_\gamma(x),\eta)=q(x,\eta)$, while $J_\gamma(x,g)$ is a Jacobian. Based on the remainder estimate \eqref{eq:11.07.2015} it was then shown in \cite[Proposition 9.6]{ramacher10} that 
 \bq
 \label{eq:6.1.2014}
\lim_{\epsilon \to 0}  I^\beta_{\gamma,\epsilon }(\mu,R,t)=\Lcal_{\gamma}^\beta(R,t) (2\pi/\mu)^{\kappa}+ O_\chi(\mu^{-\kappa-1}(\log \mu )^{\Lambda -1}),
\eq
where the coefficients $\Lcal_{\gamma}^\beta(R,t)$ are  given in terms of distributions supported on the regular part of the critical set   $\Ccal \cap T^\ast U_\gamma$ of $\Phi_\gamma$ intersected with $G\times S^\ast_{t,R} U_\gamma$, and the remainder term by distributions supported on $G\times  S^\ast_{t,R} U_\gamma$,  where $S^\ast_{t,R} U_\gamma:=  \mklm{(x,\omega) \in T^\ast U_\gamma: \zeta_\gamma(t,\kappa_\gamma(x),\omega)=R}$. In particular, $\Lcal^0_\gamma (1,0)=[{\pi_\chi}_{|H}:\1] \, \rho(0) \, \mbox{vol} \, [({\mathrm{Reg}}\, \Omega \cap S^\ast M)/G]$. This yielded the estimate 
\begin{gather*}
\hat \sigma_\chi (\rho e^{i(\cdot) \mu})
=  d_\chi [{\pi_\chi}_{|H}:\1] \, \rho(0) \, \mbox{vol} \, [({\mathrm{Reg}}\, \Omega \cap S^\ast M)/G] \,  ({\mu}{/2\pi} )^{n-\kappa -1}  + O_\chi\big (\mu^{n-\kappa-2}(\log \mu )^{\Lambda -1} \big )
\end{gather*}
  from which an asymptotic description for $N_\chi(\lambda)$, and consequently $\M_\chi(\lambda)$,  was obtained via a classical Tauberian argument, see \cite[Proof of Theorem 9.5]{ramacher10}.
Now, in order to prove Theorem \ref{thm:weylfam}, note that by the improved bounds \eqref{eq:12.07.2015} of Theorem \ref{thm:main} the integrals $I^\beta_{\gamma,\epsilon }(\mu,R,t)$ actually have asymptotic descriptions with leading terms of order $\mu^{-\kappa}$ and remainder terms bounded from above by
\bqn 
C^\beta_{\gamma,\epsilon}(R,t) \sup_{l \leq 2\kappa+3} \norm{D^l \overline \chi}_{\infty}  \mu^{-\kappa-1}(\log\mu)^{\Lambda-1}
\eqn
with constants $C^\beta_{\gamma,\epsilon}(R,t)>0$ independent of $\chi$ and $\mu$. In analogy to   \eqref{eq:6.1.2014} one therefore deduces for any $\chi \in \W_{\mu^m}$
 \bqn
  \lim_{\epsilon \to 0} { I^\beta_{\gamma,\epsilon }(\mu,R,t)}=  {\Lcal_{\gamma}^\beta(R,t) (2\pi/\mu)^{\kappa}} +  O( [{\pi_\chi}_{|H}:\1] \, \mu^{\vartheta (2\kappa+3)m}\mu^{-\kappa-1}(\log \mu )^{\Lambda -1}),
\eqn
yielding with $d_\chi [{\pi_\chi}_{|H}:\1]\geq 1$ the asymptotic formula
\begin{align*}
\frac 1{|\W_{\mu^m}|} \sum_{\chi \in \W_{\mu^m}} \frac{ \hat \sigma_\chi (\rho e^{i(\cdot) \mu})}{d_\chi [{\pi_\chi}_{|H}:\1]}&= \rho(0) \, \mbox{vol} \, [({\mathrm{Reg}}\, \Omega \cap S^\ast M)/G] \,  ({\mu}{/2\pi} )^{n-\kappa -1}  \\&+ O(\mu^{\vartheta(2\kappa+3)m}\mu^{n-\kappa-2}(\log \mu )^{\Lambda -1}).
\end{align*}
The assertion of the theorem now follows again from a classical Tauberian argument, compare \cite[Proof of Theorem 9.5]{ramacher10}.
\end{proof}

\subsection{Families of irreducible representations and the Cartan-Weyl classification} 
\label{subsec:irreps} In what follows, we shall apply Theorem  \ref{thm:weylfam} to specific families of representations given in terms of the Cartan-Weyl classification  of unitary irreducible representations of $G$.  Thus \cite{wallach}, let $G$ be a connected compact Lie group with Lie algebra $\g$ and $T\subset G$ a maximal torus with Lie algebra $\t$. The exponential function $\exp$ is a covering homomorphism of $\t$ onto $T$, and its kernel $L$ a lattice in $\t$. Let $\widehat T$ denote the  \emph{set of characters of $T$}, that is, of all continuous homomorphisms of $T$ into the circle. The differential of a character $\mu: T \to S^1$, denoted by the same letter, is a linear form  $\mu:\t\to i \R$ which is \emph{integral} in the sense that $\mu(L) \subset 2\pi i \Z$. On the other hand, if $\mu$ is an integral linear form, one defines
\bqn 
t^\mu:= e^{\mu(X)}, \qquad t= \exp X \in T,
\eqn
setting up an identification of $\widehat T$ with the integral linear forms on $\t$. Let $\g_\C$ and $\t_\C$ denote the complexifications of $\t$ and $\g$, respectively. Then $\t_\C$ is a Cartan subalgebra of $\g_\C$, and we denote the corresponding system  of roots by $\Sigma(\g_\C,\t_\C)$.  Let $\Sigma^+$ denote a set of positive roots. Since roots define integral linear forms on $\t$, one can regard them as characters of $T$. 

As before, let $\widehat G$ be the set of all equivalence classes of irreducible unitary representations of $G$ and $\chi \in \widehat G$. Due to the invariance of the trace under cyclic permutations the character of $\chi$ satisfies  $\chi(t)=\chi(g t g^{-1})$ for all $t,g \in G$. Since any element in $G$ is conjugated to an element of $T$, $\chi(g)$ is fully determined by its restriction to $T$. Now, as a consequence of the Cartan-Weyl classification of irreducible finite dimensional representations of reductive Lie algebras over $\C$ one has the  identification 
\bq
\label{eq:unitdual}
\widehat G\, \simeq\, \mklm{\Lambda \in \t_\C^\ast: \text{ $\Lambda$ is  dominant integral and $T$-integral}}.
\eq 
Here an element $\Lambda \in \t_\C^\ast$ is called \emph{dominant integral} if  $2(\Lambda, \alpha)/(\alpha,\alpha)$ is a non-negative integer for any $\alpha \in \Sigma^+$, $(\cdot, \cdot)$ being the symmetric non-degenerate form on $\t_\C^\ast $ induced by an $\Ad(G)$-invariant inner product on $\g$.  

Next, assume that $G$ is semi-simple,  write $\rho:= \frac 12 \sum_{\alpha \in \Sigma^+} \alpha$, and let $W:=W(G,T)$ be the Weyl group. For an integral linear form $\mu\in \widehat T$, define the {alternating sum} $
A(\mu)(t):= \sum_{w \in W} \det w \,  t^{w\mu}$. If $\chi \in \widehat G$, let $\Lambda_\chi\in \t_\C^\ast$ be the \emph{highest weight} given by the isomorphism \eqref{eq:unitdual}. Then, the  Weyl character formula asserts that on $T$ one has \cite{varadarajan74}
\bqn 
\chi_{|T} (t)= \frac{A(\Lambda_\chi+\rho)(t)}{t^\rho \prod_{\alpha \in \Sigma^+}(1-t^{-\alpha})}= \frac{A(\Lambda_\chi+\rho)(t)}{ \prod_{\alpha \in \Sigma^+}(t^{\alpha/2}-t^{-\alpha/2})}.
\eqn
If $G$ is simply connected this can be written as $\chi_{|T} (t)= {A(\Lambda_\chi+\rho)(t)}/{A(\rho)(t)}$. 
Since $\mu(H) \in i\R$ for all $H \in \t$ and any integral linear form $\mu$ on $\t$, one immediately deduces for any $l \in \N$ as $|\Lambda_\chi| \to \infty$ the estimate
\bqn 
\frac{d^l}{ds^l} \chi_{|T}(\exp sH)= O( |\Lambda_\chi|^l).
\eqn
 Writing $g=h(g) t(g) h (g)^{-1}$ for an arbitrary element $g \in G$ with $t(g) \in T$ and $h(g) \in G$ we obtain with $\chi(g)= \chi_{|T}(t(g))$ the following simple consequence. If $D^l$  is a differential operator on $G$ of order $l$ and $\chi \in \widehat G$ a class with highest weight $\Lambda_\chi$,  then \bq
 \label{eq:29.08.2015}
 \norm{D^l \chi}_\infty = O(|\Lambda_\chi |  ^l), \qquad |\Lambda_\chi|\to \infty.  
\eq
From this we deduce the following

\begin{cor}
In the setup of Section \ref{sec:famirreps} define $\mathcal{W}_\lambda:=\big\{\chi \in \widehat G': \, |\Lambda_\chi|\leq C \, \lambda^{\vartheta}\big\}$ for some $\vartheta\in \big [0, \frac 1{(2\kappa+3)m}\big )$ and a constant $C>0$. Then  $\{\W_\lambda\}_{\lambda\in(0,\infty)}$ constitutes a  family  with growth rate $\vartheta$, and Theorem \ref{thm:weylfam} applies.
\end{cor}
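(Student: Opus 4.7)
The plan is straightforward: the corollary is simply a verification that the family $\{\W_\lambda\}$ satisfies the quantitative condition of Definition \ref{def:semiclfam} with growth rate $\vartheta$, since its hypothesis $\vartheta \in \big[0, \frac{1}{(2\kappa+3)m}\big)$ is already built into the statement and Theorem \ref{thm:weylfam} then applies verbatim. Accordingly, the substantive step is to bound $\norm{D^l\bar\chi}_\infty/[\pi_{\chi|H}:\1]$ uniformly over $\chi \in \W_\lambda$, and I would combine the character estimate \eqref{eq:29.08.2015} with the size constraint defining $\W_\lambda$ to obtain it.

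For the numerator I would observe that $\bar\chi(g) = \chi(g^{-1})$, so that $D^l \bar\chi$ is, up to signs, the value of another differential operator on $G$ of the same order applied to $\chi$; hence \eqref{eq:29.08.2015} yields a constant $C_l>0$ with $\norm{D^l\bar\chi}_\infty \leq C_l |\Lambda_\chi|^l$ for every $\chi \in \widehat G$ with $|\Lambda_\chi|$ sufficiently large. Since only finitely many dominant integral forms have norm below any fixed threshold, the bound extends, after enlarging $C_l$ if necessary, to all $\chi \in \widehat G'$. The defining inequality $|\Lambda_\chi| \leq C \lambda^{\vartheta}$ for $\chi \in \W_\lambda$ then converts this into $\norm{D^l\bar\chi}_\infty \leq C_l' \, \lambda^{\vartheta l}$.

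For the denominator I would invoke the remark following Definition \ref{def:semiclfam}, which records that $[\pi_{\chi|H}:\1] \geq 1$ for every $\chi \in \widehat G'$ appearing in the Peter-Weyl decomposition of $\L^2(M)$. Dividing the numerator bound by this uniform lower estimate gives
\[
\max_{\chi \in \W_\lambda} \frac{\norm{D^l \bar\chi}_\infty}{[\pi_{\chi|H}:\1]} \leq C_l' \, \lambda^{\vartheta l} \qquad \forall \, \lambda \in (0,\infty),
\]
which is precisely the growth-rate condition required. It follows that $\{\W_\lambda\}$ has growth rate at most $\vartheta$, and consequently Theorem \ref{thm:weylfam} is applicable to it.

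The only mild subtlety, rather than a real obstacle, is to pass from the asymptotic statement \eqref{eq:29.08.2015} (valid only as $|\Lambda_\chi| \to \infty$) to a bound valid for \emph{all} $\lambda > 0$ and all $\chi \in \W_\lambda$. This is handled by the finiteness observation above: for any bounded range of $\lambda$, the set $\W_\lambda$ comprises only finitely many classes (and may be empty for small $\lambda$), so the inequality can be arranged to hold on the low range as well by absorbing finitely many exceptional contributions into the constant $C_l'$.
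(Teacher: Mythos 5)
Your proposal is correct and follows essentially the same route as the paper: bound the numerator by combining the character estimate \eqref{eq:29.08.2015} with the constraint $|\Lambda_\chi|\leq C\lambda^{\vartheta}$, and drop the denominator using $[\pi_{\chi|_{H}}:\mathds{1}]\geq 1$ from the remark after Definition \ref{def:semiclfam}. The two extra details you supply --- reducing $D^l\overline{\chi}$ to a same-order operator applied to $\chi$, and absorbing the finitely many classes with small $|\Lambda_\chi|$ into the constant --- are points the paper passes over silently, and they are handled correctly.
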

\begin{proof}
By \eqref{eq:29.08.2015} we have for each $l\in \N$ and each differential operator $D^l$ on $G$ of order $l$
 \[
\max_{\chi\in \W_\lambda}\frac{\norm{D^l\overline{\chi}}_\infty}{\left[\pi_{\chi|_{H}}:\mathds{1}\right]}\leq \max_{\chi\in \W_\lambda}{\norm{D^l\overline{\chi}}_\infty}{} \leq C\,\lambda^{\vartheta l }\qquad \forall \;\lambda \in(0,\infty)
\]
with a constant $C>0$ independent of $\lambda$.\footnote{Note that  the Weyl character formula implies the dimension formula $d_\chi=\prod_{\alpha \in \Sigma^+} (\Lambda_\chi+\rho,\alpha)/(\alpha,\alpha)$. 
Thus, for the set $\gd \mathcal{W}_\lambda:=\mklm{\chi \in \widehat G': \, |\Lambda_\chi|= \lambda^{\vartheta}}$ one gets the stronger estimate
\[
\max_{\chi\in \gd \W_\lambda}\frac{\norm{D^l\overline{\chi}}_\infty}{d_{\chi}\left[\pi_{\chi|_{H}}:\mathds{1}\right]}  \leq C\,\lambda^{\vartheta (l-|\Sigma^+|) }\qquad \forall \;\lambda\in(0,\infty).
\]}

\end{proof}



\providecommand{\bysame}{\leavevmode\hbox to3em{\hrulefill}\thinspace}
\providecommand{\MR}{\relax\ifhmode\unskip\space\fi MR }
\providecommand{\MRhref}[2]{%
  \href{http://www.ams.org/mathscinet-getitem?mr=#1}{#2}
}
\providecommand{\href}[2]{#2}


\end{document}